\newtheorem{theorem}{Theorem}[section]
\newtheorem{lemma}[theorem]{Lemma}
\newtheorem{proposition}[theorem]{Proposition}
\newtheorem{corollary}[theorem]{Corollary}
\theoremstyle{definition}
\numberwithin{equation}{section}
\DeclareMathOperator{\area}{{\rm area}}
\DeclareMathOperator{\sys}{{\rm sys}}
\DeclareMathOperator{\Var}{{\rm Var}}
\newcommand\R {{\mathbb R}}
\newcommand{\RP}{{\mathbb R\mathbb P}}
\numberwithin{equation}{section}
\author{Mikhail G. Katz}\address{M. Katz, Department of Mathematics,
Bar Ilan University, Ramat Gan 5290002
Israel}\email{katzmik@macs.biu.ac.il}
\author{Tahl Nowik}\address{T. Nowik, Department of Mathematics, Bar
Ilan University, Ramat Gan 5290002 Israel}\email{tahl@math.biu.ac.il}
\title [Systolic inequality with remainder in projective plane] {A
systolic inequality with remainder in the real projective plane}
\begin{document}

\thispagestyle{empty}


\begin{abstract}
The first paper in systolic geometry was published by Loewner's
student P. M. Pu over half a century ago.  Pu proved an inequality
relating the systole and the area of an arbitrary metric on the real
projective plane.  We prove a stronger version of Pu's systolic
inequality with a remainder term.
\end{abstract}

\keywords{Systole; geometric inequality; Riemannian submersion;
Cauchy-Schwarz theorem; probabilistic variance}

\subjclass[2010]{Primary 53C23; Secondary 
53A30 
}

\maketitle


\section{Introduction}

Loewner's systolic inequality for the torus and Pu's inequality
\cite{Pu52} for the real projective plane were historically the first
results in systolic geometry.  Great stimulus was provided in 1983 by
Gromov's paper \cite{Gr83}, and later by his book \cite{Gr99}.

Our goal is to prove a strengthened version with a remainder term of
Pu's systolic inequality~$\sys^2(g)\leq\frac\pi2\area(g)$ (for an
arbitrary metric~$g$ on~$\RP^2$), analogous to Bonnesen's
inequality~$L^2 - 4\pi A \geq \pi^2(R-r)^2$, where~$L$ is the length
of a Jordan curve in the plane,~$A$ is the area of the region bounded
by the curve,~$R$ is the circumradius and~$r$ is the inradius.

Note that both the original proof in Pu (\cite{Pu52}, 1952) and the
one given by Berger (\cite{Be65}, 1965, pp.\;299--305) proceed by
averaging the metric and showing that the averaging process decreases
the area and increases the systole.  Such an approach involves a
5-dimensional integration (instead of a 3-dimensional one given here)
and makes it harder to obtain an explicit expression for a remainder
term.  Analogous results for the torus were obtained in \cite{Ho09}
with generalisations in \cite{Ba19} - \cite{Sa11}.

\section{The results}

We define a closed~$3$-dimensional manifold~$M \subseteq \R^3 \times
\R^3$ by setting
\[
M=\{ (v,w) \in \R^3 \times \R^3 \colon \; v\cdot v =1, \ w\cdot w=1, \
v\cdot w=0 \}
\]
where~$v \, \cdot \, w$ is the scalar product on~$\R^3$. We have a
diffeomorphism~$M\to SO(3,\R)$,~$(v,w) \mapsto (v,w,v\times w)$,
where~$v\times w$ is the vector product on~$\R^3$.  Given a
point~$(v,w) \in M$, the tangent space~$T_{(v,w)}M$ can be identified
by differentiating the three defining equations of~$M$ along a path
through~$(v,w)$.  Thus
\[
T_{(v,w)}M=\{ (X,Y) \in \R^3 \times \R^3\colon X\cdot v =0, \ Y\cdot
w=0, \ X\cdot w+Y\cdot v=0 \}.
\]

We define a Riemannian metric~$g_M$ on~$M$ as follows.  Given a point
$(v,w)\in M$, let~$n=v\times w$ and declare the basis~$(0,n) , (n,0) ,
(w,-v)$ of~$T_{(v,w)}M$ to be orthonormal.  This metric is a
modification of the metric restricted to~$M$
from~$\R^3\times\R^3=\R^6$.  Namely, with respect to the Euclidean
metric on~${\R}^6$ the above three vectors are orthogonal and the
first two have length 1.  However, the third vector has Euclidean
length~$\sqrt{2}$, whereas we have defined its length to be 1.  Thus
if~$A\subseteq T_{(v,w)}M$ denotes the span of~$(0,n)$ and~$(n,0)$,
and~$B\subseteq T_{(v,w)}M$ is spanned by~$(w,-v)$, then the metric
$g_M$ on~$M$ is obtained from the Euclidean metric~$g$ on~$\R^6$
(viewed as a quadratic form) as follows:
\begin{equation}
g_M = g\!\downharpoonright_A^{\phantom{I}}+\,\frac12\,
g\!\downharpoonright_B^{\phantom{II}}.
\end{equation}

Each of the natural projections~$p,q \colon M \to S^2$ given
by~$p(v,w)=v$ and~$q(v,w)=w$, exhibits~$M$ as a circle bundle
over~$S^2$.

\begin{lemma}
\label{l101}
The maps~$p$ and~$q$ on~$(M,g_M)$ are Riemannian submersions, over the
unit sphere~$S^2\subseteq\R^3$.
\end{lemma}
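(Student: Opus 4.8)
The plan is to verify the two defining properties of a Riemannian submersion for each of the maps $p$ and $q$: first, that the differential is surjective onto each tangent space of $S^2$ (so we have a submersion), and second, that $dp$ (resp.\ $dq$) restricts to a \emph{linear isometry} on the horizontal subspace, i.e.\ the orthogonal complement of the vertical tangent space $\ker dp$ (resp.\ $\ker dq$) with respect to $g_M$. By the evident symmetry of the construction under swapping the two $\R^3$ factors (which interchanges $p$ and $q$ and sends the frame $(0,n),(n,0),(w,-v)$ to $(n,0),(0,n),(-w,v)$, preserving $g_M$), it suffices to treat $p$ in detail and let the argument for $q$ follow \emph{mutatis mutandis}.

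First I would compute the differentials on the chosen orthonormal frame. Differentiating $p(v,w)=v$ gives $dp(X,Y)=X$, so on the frame we get $dp(0,n)=0$, $dp(n,0)=n$, and $dp(w,-v)=w$. Thus the vertical space $\ker dp$ is spanned by $(0,n)$, a $g_M$-unit vector, and the horizontal space is spanned by the remaining two frame vectors $(n,0)$ and $(w,-v)$, which form a $g_M$-orthonormal basis by construction. Identifying $T_vS^2=\{X\in\R^3:X\cdot v=0\}$ with its standard Euclidean metric, I would check that $dp$ carries this horizontal orthonormal basis to an orthonormal basis of $T_vS^2$: the images are $n=v\times w$ and $w$, which are mutually orthogonal unit vectors both orthogonal to $v$, hence an orthonormal basis of $T_vS^2$. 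Surjectivity of $dp$ is immediate from this, and the isometry property on the horizontal space is exactly the statement that a $g_M$-orthonormal basis maps to a Euclidean-orthonormal basis.

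The one point requiring genuine care is the role of the factor $\tfrac12$ in the definition of $g_M$, which is the whole reason the metric is modified from the restricted Euclidean one. The vector $(w,-v)$ is simultaneously horizontal for $p$ and has a nontrivial $B$-component, so if its length had been taken as its Euclidean value $\sqrt2$ rather than $1$, the map $dp$ would scale this direction and fail to be an isometry. I would make explicit that, precisely because $g_M$ declares $(w,-v)$ to be a unit vector, its image $w$ (a genuine unit vector in $T_vS^2$) has matching length. This is the step I expect to be the main subtlety: confirming that the normalisation in equation~(2.1) is exactly what makes both $p$ and $q$ Riemannian submersions rather than merely submersions, and that the asymmetry between the $A$-directions and the $B$-direction does not obstruct the isometry on the horizontal distribution. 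Finally, since $(v,w)\in M$ was arbitrary and all identifications are smooth, the pointwise isometry assembles into the submersion statement over all of $S^2$, completing the argument for $p$ and, by the symmetry above, for $q$.
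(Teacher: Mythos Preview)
Your proof is correct and follows essentially the same approach as the paper: identify the vertical direction as the span of $(0,n)$, observe that the remaining $g_M$-orthonormal frame vectors $(n,0)$ and $(w,-v)$ are horizontal and map under $dp$ to the orthonormal pair $n,w\in T_vS^2$, and conclude. One small slip in your symmetry remark: the swap $(X,Y)\mapsto(Y,X)$ sends $(w,-v)$ to $(-v,w)$, not $(-w,v)$ (the latter is not even tangent to $M$ at $(w,v)$); this does not affect the argument, since up to signs the pushed-forward frame agrees with the defining frame at $(w,v)$ and the isometry claim stands.
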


\begin{proof}
For the projection~$p$, given~$(v,w) \in M$, the vector~$(0,n)$ as
defined above is tangent to the fiber~$p^{-1}(v)$. The other two
vectors,~$(n,0)$ and~$(w,-v)$, are thus an orthonormal basis for the
subspace of~$T_{(v,w)}M$ normal to the fiber, and are mapped by~$dp$
to the orthonormal basis~$n,w$ of~$T_v S^2$.
\end{proof}

The projection~$p$ maps the fiber~$q^{-1}(w)$ onto a great circle
of~$S^2$.  This map preserves length since the unit vector~$(n,0)$,
tangent to the fiber~$q^{-1}(w)$ at~$(v,w)$, is mapped by~$dp$ to the
unit vector~$n \in T_v S^2$.  The same comments apply when the roles
of~$p$ and~$q$ are reversed.

In the following proposition, integration takes place respectively
over great circles~$C\subseteq S^2$, over the fibers in~$M$,
over~$S^2$, and over~$M$.  The integration is always with respect to
the volume element of the given Riemannian metric.  Since~$p$ and~$q$
are Riemannian submersions by Lemma~\ref{l101}, we can use Fubini's
Theorem to integrate over~$M$ by integrating first over the fibers of
either~$p$ or~$q$, and then over~$S^2$; cf.\;\cite[Lemma\;4]{Cs18}.
By the remarks above, if~$C=p(q^{-1}(w))$ and~$f\colon S^2 \to \R$
then~$\int_{q^{-1}(w)} f\circ p = \int_C f$.

\begin{proposition}
\label{inq}
Given a continuous function~$f\colon S^2 \to \R^+$, we define~$m\in\R$
by setting~$m=\min \{\int_C f\colon C \subseteq S^2 \text{ a great
circle} \}$.  Then
\[
\frac{m^2}{\pi}\leq\frac{1}{4\pi}\bigg(\int_{S^2}f\bigg)^2\leq\int_{S^2}f^2,
\]
where equality in the second inequality occurs if and only if~$f$ is
constant.
\end{proposition}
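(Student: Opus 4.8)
The plan is to treat the two inequalities separately, since they are of quite different natures: the right-hand inequality is a direct application of the Cauchy--Schwarz theorem, whereas the left-hand inequality rests on the double fibration $p,q\colon M\to S^2$ of Lemma~\ref{l101} together with Fubini's Theorem.

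First I would dispose of the second inequality. Viewing $f$ and the constant function $1$ as elements of $L^2(S^2)$ and applying Cauchy--Schwarz gives
\[
\left(\int_{S^2} f\right)^2 = \left(\int_{S^2} f\cdot 1\right)^2 \leq \left(\int_{S^2} f^2\right)\left(\int_{S^2} 1\right) = 4\pi\int_{S^2} f^2,
\]
since the area of the unit sphere is $4\pi$. Dividing by $4\pi$ yields the stated bound, and the equality case of Cauchy--Schwarz shows that equality holds precisely when $f$ is a scalar multiple of the constant function, that is, when $f$ is constant.

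For the first inequality I would reduce it to the claim that $\int_{S^2} f \geq 2m$; indeed, squaring this (everything being positive) and dividing by $4\pi$ recovers $\tfrac{m^2}{\pi}\leq\tfrac{1}{4\pi}\big(\int_{S^2} f\big)^2$. To prove $\int_{S^2} f\geq 2m$ I would evaluate the single integral $\int_M (f\circ p)$ over $(M,g_M)$ in two ways, using that $p$ and $q$ are Riemannian submersions so that Fubini's Theorem applies along the fibers of either map. Integrating first along the fibers of $p$: on $p^{-1}(v)$ the function $f\circ p$ equals the constant $f(v)$, and this fiber is a great circle of length $2\pi$ (its unit tangent being $(0,n)$), so the fiber integral is $2\pi f(v)$ and hence $\int_M (f\circ p) = 2\pi\int_{S^2} f$. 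Integrating instead along the fibers of $q$ and invoking the identity $\int_{q^{-1}(w)} f\circ p = \int_{C_w} f$ recorded just before the statement, with $C_w=p(q^{-1}(w))$ a great circle, each inner integral is at least $m$ by definition of $m$, so $\int_M (f\circ p)=\int_{S^2}\big(\int_{C_w} f\big)\,dw \geq 4\pi m$. Comparing the two evaluations gives $2\pi\int_{S^2} f \geq 4\pi m$, which is the required estimate.

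The main obstacle is the bookkeeping in this double Fubini computation: one must verify that each fiber of $p$ is a great circle of length exactly $2\pi$ in the metric $g_M$, so that the fiber integral of the constant $f(v)$ is genuinely $2\pi f(v)$, and that the coarea factorization of $\int_M$ along the fibers is legitimate --- which is exactly what the Riemannian submersion property of Lemma~\ref{l101} supplies. Once both evaluations of $\int_M (f\circ p)$ are in hand, the inequality follows immediately, and combining it with the Cauchy--Schwarz step completes the proof.
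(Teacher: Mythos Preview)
Your proposal is correct and follows essentially the same route as the paper: the second inequality by Cauchy--Schwarz against the constant function~$1$, and the first by computing $\int_M f\circ p$ two ways via the Riemannian submersions $p$ and $q$ of Lemma~\ref{l101} to obtain $\int_{S^2} f\ge 2m$. The only cosmetic difference is that the paper starts from $\int_{S^2} f$ and unwinds it through~$M$, while you compute $\int_M f\circ p$ directly and compare; the content is identical.
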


\begin{proof}
Using the fact that~$M$ is the total space of a pair of Riemannian
submersions, we obtain
\[
\begin{aligned}
\int_{S^2} f & = \int_{S^2}\bigg(\frac{1}{2\pi}\int_{p^{-1}(v)} f\circ
p\bigg)
\\&= \frac{1}{2\pi}\int_M f \circ p 
\\&=
\frac{1}{2\pi}\int_{S^2}\bigg(\int_{q^{-1}(w)} f\circ p\bigg) \\&\geq
\frac{1}{2\pi}\int_{S^2}m =2m,
\end{aligned}
\]
proving the first inequality.  By the Cauchy--Schwarz inequality, we
have
\[
\Big(\int_{S^2} 1 \cdot f \Big)^2 \leq 4 \pi \int_{S^2} f^2,
\]
proving the second inequality.  Here equality occurs if and only if
$f$ and~$1$ are linearly dependent, i.e., if and only if~$f$ is
constant.
\end{proof}

We define the quantity~$V_f$ by setting~$V_f =\int_{S^2} f^2 -
\frac{1}{4\pi}\big(\int_{S^2} f \big)^2$.  Then Proposition \ref{inq}
can be restated as follows.

\begin{corollary}
\label{c1}
Let~$f\colon S^2 \to \R^+$ be continuous.  Then
\[
\int_{S^2} f^2 - \frac{m^2}{\pi} \geq V_f \geq 0,
\]
and~$V_f=0$ if and only if~$f$ is constant.
\end{corollary}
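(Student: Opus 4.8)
The plan is to reduce Corollary \ref{c1} entirely to Proposition \ref{inq} together with the definition of $V_f$, since the two statements are essentially algebraic rearrangements of one another. First I would recall the definition $V_f = \int_{S^2} f^2 - \frac{1}{4\pi}\big(\int_{S^2} f\big)^2$, so that the claimed chain of inequalities splits into two separate assertions. The left inequality $\int_{S^2} f^2 - \frac{m^2}{\pi} \geq V_f$ is equivalent, after substituting the definition of $V_f$ and cancelling the common term $\int_{S^2} f^2$, to the statement $-\frac{m^2}{\pi} \geq -\frac{1}{4\pi}\big(\int_{S^2} f\big)^2$, i.e.\ to $\frac{m^2}{\pi} \leq \frac{1}{4\pi}\big(\int_{S^2} f\big)^2$. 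But this is precisely the first inequality of Proposition \ref{inq}, so no further work is needed.

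For the right inequality $V_f \geq 0$, I would again substitute the definition of $V_f$, reducing the claim to $\int_{S^2} f^2 \geq \frac{1}{4\pi}\big(\int_{S^2} f\big)^2$, which is exactly the second inequality of Proposition \ref{inq}. The equality case transfers as well: since $V_f \geq 0$ comes from the Cauchy--Schwarz step, equality $V_f = 0$ holds if and only if $f$ and the constant function $1$ are linearly dependent, which on $S^2$ means $f$ is constant. Thus the sharpness clause in Corollary \ref{c1} is inherited directly from the corresponding clause in Proposition \ref{inq}.

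The only point requiring a word of care is bookkeeping of signs when rearranging the middle expression, but there is no analytic obstacle here: both inequalities are literal restatements of the two inequalities already proved in Proposition \ref{inq}, reorganized around the auxiliary quantity $V_f$. I would therefore present the proof in two short sentences, one invoking each half of the proposition, and conclude that the corollary is simply a repackaging of Proposition \ref{inq} designed to isolate the probabilistic variance term $V_f$ as an explicit nonnegative remainder.
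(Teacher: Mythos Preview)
Your proposal is correct and follows the same approach as the paper: the corollary is obtained from Proposition~\ref{inq} by noting that $a \leq b \leq c$ is equivalent to $c - a \geq c - b \geq 0$, applied with $a = \tfrac{m^2}{\pi}$, $b = \tfrac{1}{4\pi}\big(\int_{S^2} f\big)^2$, and $c = \int_{S^2} f^2$. You have simply spelled out this rearrangement in more detail, and the equality case is handled identically.
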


\begin{proof}
The proof is obtained from Proposition~\ref{inq} by noting that~$a\leq
b \leq c$ if and only if~$c-a \geq c-b \geq 0$.
\end{proof}

We can assign a probabilistic meaning to~$V_f$ as follows. Divide the
area measure on~$S^2$ by~$4\pi$, thus turning it into a probability
measure~$\mu$.  A function~$f\colon S^2 \to \R^+$ is then thought of
as a random variable with
expectation~$E_\mu(f)=\frac{1}{4\pi}\int_{S^2}f$.  Its variance is
thus given by
\[
\Var_\mu(f)=E_\mu(f^2)-\big(E_\mu(f)\big)^2 = \frac{1}{4\pi}\int_{S^2}
f^2 - \bigg(\frac{1}{4\pi}\int_{S^2} f\bigg)^2 = \frac{1}{4\pi} V_f.
\]
The variance of a random variable~$f$ is non-negative, and it vanishes
if and only if~$f$ is constant. This reproves the corresponding
properties of~$V_f$ established above via the Cauchy--Schwarz
inequality.

Now let~$g_0$ be the metric of constant Gaussian curvature~$K=1$
on~$\RP^2$.  The double covering~$\rho \colon S^2 \to (\RP^2, g_0)$ is
a local isometry.  Each projective line~$C\subseteq\RP^2$ is the image
under~$\rho$ of a great circle of~$S^2$.

\begin{proposition}
\label{inqP}
Given a function~$f\colon \RP^2 \to \R^+$, we define~$\bar m\in\R$ by
setting~$\bar{m}=\min \{ \int_C f \colon C \subseteq \RP^2 \ \text{a
projective line} \}$.  Then
\[
\frac{2\bar{m}^2}{\pi} \ \leq \ \frac{1}{2\pi}\bigg(\int_{\RP^2} f
\bigg)^2 \ \leq \ \int_{\RP^2} f^2,
\]
where equality in the second inequality occurs if and only if~$f$ is
constant.
\end{proposition}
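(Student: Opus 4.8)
The plan is to reduce everything to Proposition~\ref{inq} on $S^2$ by pulling $f$ back along the double cover $\rho$. Set $\tilde f = f\circ\rho\colon S^2 \to \R^+$. Since $\rho$ is a two-to-one local isometry, the area integrals simply double: $\int_{S^2}\tilde f = 2\int_{\RP^2} f$ and $\int_{S^2}\tilde f^2 = 2\int_{\RP^2} f^2$. Thus the two sides of the target inequality that involve area integrals are controlled as soon as I control the middle term and the term $\bar m$.

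The crucial bookkeeping step concerns the line integrals, and it is logically independent of the covering degree on areas. A great circle $\tilde C\subseteq S^2$ is invariant under the antipodal map, so $\rho$ restricts to a two-to-one local isometry from $\tilde C$ onto the projective line $C=\rho(\tilde C)$; hence $\int_{\tilde C}\tilde f = 2\int_C f$. The assignment $\tilde C\mapsto\rho(\tilde C)$ sends great circles onto all projective lines (this surjectivity is exactly the remark preceding the proposition), so the set of values $\{\int_{\tilde C}\tilde f\}$ equals $\{2\int_C f\}$ as $C$ ranges over projective lines. Taking minima gives $m = 2\bar m$, where $m$ is the quantity of Proposition~\ref{inq} applied to $\tilde f$.

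Now I would simply apply Proposition~\ref{inq} to $\tilde f$ and substitute. The chain $\tfrac{m^2}{\pi}\le\tfrac{1}{4\pi}(\int_{S^2}\tilde f)^2\le\int_{S^2}\tilde f^2$ becomes, after inserting $m=2\bar m$, $\int_{S^2}\tilde f=2\int_{\RP^2}f$, and $\int_{S^2}\tilde f^2=2\int_{\RP^2}f^2$, the chain $\tfrac{4\bar m^2}{\pi}\le\tfrac{1}{\pi}(\int_{\RP^2}f)^2\le 2\int_{\RP^2}f^2$. Dividing through by $2$ yields exactly the claimed inequality.

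Finally, equality in the second inequality holds, by Proposition~\ref{inq}, precisely when $\tilde f$ is constant; since $\rho$ is surjective this is equivalent to $f$ being constant. I do not expect any genuine obstacle here: the entire content is the reduction to the sphere, and the only thing requiring care is keeping track of the two separate factors of two, namely the one coming from the two-to-one area cover $S^2\to\RP^2$ and the logically distinct one coming from the fact that each great circle double-covers its image projective line.
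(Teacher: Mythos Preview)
Your proof is correct and follows essentially the same approach as the paper: apply Proposition~\ref{inq} to $\tilde f=f\circ\rho$, use the doubling relations $\int_{S^2}\tilde f=2\int_{\RP^2}f$ and $\int_{\rho^{-1}(C)}\tilde f=2\int_C f$, and transfer the equality condition via surjectivity of $\rho$. You have simply spelled out more of the bookkeeping (the explicit identification $m=2\bar m$ and the division by $2$) than the paper does.
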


\begin{proof}
We apply Proposition \ref{inq} to the composition~$f\circ\rho$.  Note
that we have $\int_{\rho^{-1}(C)} f\circ \rho = 2 \int_C f$
and~$\int_{S^2} f \circ \rho = 2 \int_{\RP^2} f$. The condition
for~$f$ to be constant holds since~$f$ is constant if and only
if~$f\circ \rho$ is constant.
\end{proof}

For~$\RP^2$ we define~$\bar{V}_f = \int_{\RP^2} f^2 -
\frac{1}{2\pi}\big(\int_{\RP^2} f \big)^2 =\frac{1}{2}V_{f\circ\rho}$.
We obtain the following restatement of Proposition \ref{inqP}.

\begin{corollary}
\label{c1P} 
Let~$f\colon\RP^2 \to \R^+$ be a continuous function.  Then
\[
\int_{\RP^2} f^2 - \frac{2\bar{m}^2}{\pi} \geq \bar{V}_f \geq 0,
\]
where~$\bar{V}_f=0$ if and only if~$f$ is constant.
\end{corollary}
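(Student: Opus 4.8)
The plan is to deduce this corollary directly from Proposition~\ref{inqP} by the same elementary rearrangement used to pass from Proposition~\ref{inq} to Corollary~\ref{c1}. Proposition~\ref{inqP} furnishes a chain of three quantities $a \leq b \leq c$, where $a = \frac{2\bar m^2}{\pi}$, $b = \frac{1}{2\pi}\big(\int_{\RP^2} f\big)^2$, and $c = \int_{\RP^2} f^2$. First I would record the purely formal equivalence that for real numbers one has $a \leq b \leq c$ if and only if $c - a \geq c - b \geq 0$; applying this to the chain above converts Proposition~\ref{inqP} into the desired two-sided estimate.

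The key identification is then to recognize the middle difference $c - b$. By the definition $\bar V_f = \int_{\RP^2} f^2 - \frac{1}{2\pi}\big(\int_{\RP^2} f\big)^2$, we have exactly $c - b = \bar V_f$, while $c - a = \int_{\RP^2} f^2 - \frac{2\bar m^2}{\pi}$. Substituting these into the rearranged chain yields $\int_{\RP^2} f^2 - \frac{2\bar m^2}{\pi} \geq \bar V_f \geq 0$, which is the asserted inequality.

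Finally, for the equality case I would simply transfer the equality condition already established in Proposition~\ref{inqP}. There the second inequality $b \leq c$ is an equality precisely when $f$ is constant; since $\bar V_f = c - b$, the vanishing $\bar V_f = 0$ is equivalent to $b = c$, hence to $f$ being constant. There is essentially no obstacle here: the entire content is carried by Proposition~\ref{inqP} (and ultimately by the Cauchy--Schwarz step), so this corollary amounts to a transcription of that proposition in terms of the variance-type quantity $\bar V_f$.
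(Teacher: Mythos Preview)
Your argument is correct and is exactly the paper's approach: the corollary is obtained from Proposition~\ref{inqP} by the same rearrangement $a\leq b\leq c \iff c-a\geq c-b\geq 0$ used to deduce Corollary~\ref{c1} from Proposition~\ref{inq}, together with the identification $c-b=\bar V_f$ and the equality case from the Cauchy--Schwarz step. The paper in fact does not even write out a separate proof, presenting the corollary simply as a restatement of Proposition~\ref{inqP}.
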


Relative to the probability measure induced by~$\frac{1}{2\pi}g_0$
on~$\RP^2$, we have~$E(f) = \frac{1}{2\pi}\int_{\RP^2}f$, and
therefore~$\Var(f)=\frac{1}{2\pi}\bar{V}_f$, providing a probabilistic
meaning for the quantity~$\bar{V}_f$, as before.

By the uniformization theorem, every metric~$g$ on~$\RP^2$ is of the
form~$g=f^2 g_0$ where~$g_0$ is of constant Gaussian curvature~$+1$,
and the function~$f\colon\RP^2\to\R^+$ is continuous.  The area of~$g$
is~$\int_{\RP^2} f^2$, and the~$g$-length of a projective line~$C$ is
$\int_C f$.  Let~$L$ be the shortest length of a noncontractible
loop. Then~$L\leq \bar{m}$ where~$\bar{m}$ is defined in
Proposition~\ref{inqP}, since a projective line in~$\RP^2$ is a
noncontractible loop.  Then Corollary~\ref{c1P} implies~$\area(\RP^2,
g) - \frac{2L^2}{\pi} \ \geq \ \bar{V}_f \ \geq \ 0$.
If~$\area(\RP^2, g) = \frac{2L^2}{\pi}$ then~$ \bar{V}_f = 0$, which
implies that~$f$ is constant, by Corollary \ref{c1P}. Conversely, if
$f$ is a constant~$c$, then the only geodesics are the projective
lines, and therefore~$L=c\pi$.  Hence~$\frac{2L^2}{\pi} = 2\pi
c^2=\area(\RP^2)$.  We have thus completed the proof of the following
result strengthening Pu's inequality.

\begin{theorem}
\label{pu}
Let~$g$ be a Riemannian metric on~$\RP^2$.  Let~$L$ be the shortest
length of a noncontractible loop in~$(\RP^2,g)$.
Let~$f\colon\RP^2\to\R^+$ be such that~$g=f^2g_0$ where~$g_0$ is of
constant Gaussian curvature~$+1$.  Then 
\[
\area(g) - \frac{2L^2}{\pi}\geq 2\pi \Var(f),
\]
where the variance is with respect to the probability measure induced
by~$\frac{1}{2\pi} g_0$.  Furthermore,
equality\,~$\area(g)=\frac{2L^2}{\pi}$ holds if and only if~$f$ is
constant.
\end{theorem}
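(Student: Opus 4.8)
The plan is to deduce Theorem~\ref{pu} almost immediately from Corollary~\ref{c1P} together with the probabilistic identification of~$\bar V_f$. The main work has already been front-loaded into the earlier propositions: the geometric content (the double-submersion Fubini argument and the Cauchy--Schwarz step) lives in Proposition~\ref{inq}, descends to~$\RP^2$ via the local isometry~$\rho$ in Proposition~\ref{inqP}, and is repackaged as the remainder inequality~$\int_{\RP^2} f^2 - \frac{2\bar m^2}{\pi}\geq \bar V_f\geq 0$ in Corollary~\ref{c1P}. So the theorem is really a matter of translating that corollary from the language of the auxiliary quantities~$\bar m$ and~$\bar V_f$ into the intrinsic language of area, systole, and variance.

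The key steps, in order, are as follows. First I would invoke the uniformization theorem to write the arbitrary metric as~$g=f^2 g_0$ with~$g_0$ of curvature~$+1$ and~$f\colon\RP^2\to\R^+$ continuous, and record the two identities~$\area(g)=\int_{\RP^2} f^2$ and~$\length_g(C)=\int_C f$ for a projective line~$C$. Second I would relate the true systole~$L$ to the quantity~$\bar m$: since every projective line is a noncontractible loop in~$\RP^2$, the shortest noncontractible loop is no longer than the shortest projective line, giving~$L\leq\bar m$, hence~$\frac{2L^2}{\pi}\leq\frac{2\bar m^2}{\pi}$ and therefore~$\area(g)-\frac{2L^2}{\pi}\geq\int_{\RP^2}f^2-\frac{2\bar m^2}{\pi}$. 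Third I would chain this with the left-hand inequality of Corollary~\ref{c1P} to get~$\area(g)-\frac{2L^2}{\pi}\geq\bar V_f$, and finally substitute the probabilistic identity~$\bar V_f = 2\pi\,\Var(f)$ (for the measure induced by~$\frac{1}{2\pi}g_0$) to reach the stated inequality~$\area(g)-\frac{2L^2}{\pi}\geq 2\pi\,\Var(f)\geq 0$.

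The subtle point, and the step I expect to require the most care, is the equality case rather than the inequality itself. One direction is clean: if~$\area(g)=\frac{2L^2}{\pi}$, then since every term in the chain is squeezed, we must have~$\bar V_f=0$, whence~$f$ is constant by Corollary~\ref{c1P}. The reverse direction is where one must be alert, because the inequality~$L\leq\bar m$ could a priori be strict and break the equality. So if~$f$ is a constant~$c$, I would argue that the metric~$g=c^2 g_0$ is just a rescaling of the round metric, for which the closed geodesics are exactly the projective lines; hence the shortest noncontractible loop is itself a projective line and~$L=\bar m=c\pi$. Then~$\frac{2L^2}{\pi}=2\pi c^2=\int_{\RP^2}c^2=\area(g)$, closing the equality case. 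The only thing to verify carefully is that in the constant-curvature case no noncontractible loop beats a projective line in length, which is where the geometry of the round~$\RP^2$ is genuinely used.
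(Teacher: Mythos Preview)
Your proposal is correct and follows essentially the same route as the paper: invoke uniformization, use $L\leq\bar m$ because projective lines are noncontractible, apply Corollary~\ref{c1P}, and translate $\bar V_f$ into $2\pi\,\Var(f)$; the equality case is handled exactly as you describe, by observing that for constant~$f$ the geodesics of the rescaled round metric are the projective lines, so $L=c\pi$.
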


\bibliographystyle{amsalpha}

\begin{thebibliography}{AI}

\bibitem{Pu52} Pao Ming Pu, Some inequalities in certain nonorientable
Riemannian manifolds, Pacific J. Math.  2 (1952), 55--71.


\bibitem{Gr83} Mikhael Gromov, Filling Riemannian manifolds.
J. Differential Geom. 18 (1983), no.\;1, 1--147.


\bibitem{Gr99} Mikhael Gromov, Metric structures for Riemannian and
non-Riemannian spaces, Based on the 1981 French original with
appendices by M. Katz, P. Pansu and S. Semmes, Translated from the
French by Sean Michael Bates, Progress in Mathematics, 152,
Birkh\"auser Boston, Boston, MA, 1999.


\bibitem{Be65} Marcel Berger, Lectures on geodesics in Riemannian
geometry, Tata Institute of Fundamental Research Lectures on
Mathematics, No. 33 Tata Institute of Fundamental Research, Bombay
1965.


\bibitem{Ho09} Charles Horowitz, Karin Usadi Katz, Mikhail Katz,
Loewner's torus inequality with isosystolic defect, J. Geom. Anal.  19
(2009), no.\;4, 796--808.



\bibitem{Ba19} Ivan Babenko, Florent Balacheff, and Guillaume Bulteau,
Systolic geometry and simplicial complexity for groups, J. Reine
Angew. Math.  757 (2019), 247--277.


\bibitem{Ba10} Florent Balacheff, A local optimal diastolic inequality
on the two-sphere, J. Topol. Anal. 2 (2010), no.\;1, 109--121.

\bibitem{Ba07} Viktor Bangert, Christopher Croke, Sergei Ivanov, and
Mikhail Katz, Boundary case of equality in optimal Loewner-type
inequalities, Transactions of the American Mathematical Society
\textbf{359} (2007) (1), 1--17.



\bibitem{Cr20} Paul Creutz,   Rigidity of the Pu inequality and quadratic
isoperimetric constants of normed spaces, preprint (2020).  See
\url{https://arxiv.org/pdf/2004.01076.pdf}




\bibitem{El15} Chady El Mir and Zeina Yassine, Conformal geometric
inequalities on the Klein bottle, Conform. Geom. Dyn. 19 (2015),
240--257.



\bibitem{Gu10} Larry Guth, Systolic inequalities and minimal
hypersurfaces, Geom. Funct. Anal.  19 (2010), no. 6, 1688--1692.



\bibitem{He19} James Hebda, The primitive length spectrum of 2-D tori
and generalized Loewner inequalities, Trans. Amer. Math. Soc. 372
(2019), no.\;9, 6371--6401.


\bibitem{Iv04} Sergei Ivanov and Mikhail Katz, Generalized degree and
optimal Loewner-type inequalities, Israel J. Math.  141 (2004),
221--233.


\bibitem{Ka07} Mikhail Katz, Systolic geometry and topology, with an
appendix by Jake P. Solomon, Mathematical Surveys and Monographs, 137,
American Mathematical Society, Providence, RI, 2007.


\bibitem{Ka06} Mikhail Katz and Stephane Sabourau, Hyperelliptic
surfaces are Loewner, Proceedings of the American Mathematical Society
134 (2006), no.\;4, 1189--1195.


\bibitem{Li17} Yevgeny Liokumovich, Alexander Nabutovsky, and Regina
Rotman, Lengths of three simple periodic geodesics on a Riemannian
2-sphere, Math. Ann.  367 (2017), no. 1--2, 831--855.


\bibitem{Sa11} Stephane Sabourau, Isosystolic genus three surfaces
critical for slow metric variations, Geom. Topol.  15 (2011), no.\;3,
1477--1508.


\bibitem{Cs18} Bal\'azs Csik\'os and M\'arton Horv\'ath, Harmonic
manifolds and tubes, J. Geom. Anal. 28 (2018), no.\;4, 3458--3476.



\end{thebibliography}
 
\end{document}